\documentclass[nopreprintline,12pt]{elsarticle}

 \usepackage{amsthm}
\usepackage{lineno}
\usepackage{latexsym,amscd,amsfonts,enumerate,supertabular}
\usepackage{tabularx}
\usepackage{array}
\usepackage{bigstrut}
\usepackage{graphicx}
\usepackage{epsfig}
\usepackage{amssymb}
\usepackage[intlimits]{amsmath}
\usepackage{mathrsfs} 
\usepackage{algorithm}

\usepackage{multirow}
\usepackage{hhline}	
\usepackage{makecell}
\usepackage{color}
\usepackage{xcolor}
\usepackage{float}
\usepackage{graphicx,subfigure}
\usepackage{mathabx}

\definecolor{darkmagenta}{rgb}{0.55, 0.0, 0.55}


\newcolumntype{R}[1]{>{\raggedleft\let\newline\\\arraybackslash\hspace{0pt}}m{#1}}
\newcolumntype{L}[1]{>{\raggedright\let\newline\\\arraybackslash\hspace{0pt}}m{#1}}
\newcolumntype{C}[1]{>{\centering\let\newline\\\arraybackslash\hspace{0pt}}m{#1}}




\numberwithin{equation}{section}
\newtheorem{lemma}[]{Lemma}
\numberwithin{lemma}{section}
\newtheorem{theorem}[]{Theorem}
\numberwithin{theorem}{section}
\numberwithin{algorithm}{section}
\newtheorem{remark}[]{Remark}
\numberwithin{remark}{section}
\newtheorem{example}[]{Example}
\numberwithin{example}{section}
\newtheorem{definition}[]{Definition}
\numberwithin{definition}{section}
\newtheorem{corallary}[]{Corallary}
\numberwithin{corallary}{section}

\newcommand{\N}{\mathbb N}
\newcommand{\R}{\mathbb R}
\newcommand{\XX}{\mathfrak X}

\newcommand{\A}{\mathscr A}

\newcommand{\cl}{\text{cl}}

\begin{document}
\begin{frontmatter}
\title{{\bf
A representation theorem for set-valued submartingales
}}
\author[ioit]{Luc T. Tuyen\corref{cor1}}
 \ead{tuyenlt@ioit.ac.vn}
\author[msu]{Vu T. Luan}
\ead{luan@math.msstate.edu}

\cortext[cor1]{Corresponding author}
\address[ioit]{Institute of Information Technology, Vietnam Academy of Science and Technology, Hanoi, Vietnam}
\address[msu]{Department of Mathematics and Statistics, Mississippi State University,
 410 Allen Hall, 175 President's Circle, Mississippi State, MS, 39762, USA}
\begin{abstract}
\small  

The integral representation theorem for martingales has been widely used in probability theory.
In this work, we propose and prove a general representation theorem for a class of set-valued submartingales. 
 We also extend the stochastic integral representation for non-trivial initial set-valued martingales.
Moreover, we show that this result covers the existing ones in the literature for both degenerated and non-degenerated set-valued martingales.

\small  
\end{abstract}

\begin{keyword}
\small 
Set-valued integrals
\sep
Set-valued martingales
\sep 
Random sets
\sep 
Set-valued stochastic processes
\sep
 Interval-Valued martingale
 \sep
 Martingale representation theorem
\end{keyword}
\end{frontmatter}


\section{Introduction}\label{intro}
Martingales 
(i.e., the single-valued case) have played an important role in many statistical applications \cite{hall2014martingale}. In the random sets (or set-valued)
framework, the definitions  of set-valued martingales, sub-martingales and super-martingales have been introduced by Hiai and Umegaki \cite{hiai1977integrals}. In recent years, the limit theorems of 
such type of martingales in the discrete setting have been extended and represented in \cite{molchanov2017theory} and \cite{li2013limit}. 
While the applications of stochastic processes in general and of martingales in particular have been widely used long time ago in many statistical fields, applications of set-valued stochastic only received more attention in recent years, see e.g., in mathematical economics \cite{arutyunov2016convex}, in control theory \cite{aubin2000introduction} and finance \cite{Kisielewicz2020set} and set-valued optimization problems \cite{peng2022painleve, peng2022Tech}.

To incorporate  the set-valued variable theory into practical applications, the integration for set-valued functions has been extended, see e.g.,  Aumann \cite{aumann1965integrals}, Hukuhara \cite{hukuhara1967integration}, Debreu \cite{debreu1967integration}, Henstock \cite{wu2000henstock} and Bartle \cite{wu2001set}. 
For set-valued stochastic processes, we refer the readers to several works of
Kisielewicz \cite{kisielewicz1997set,kisielewicz2012some,kisielewicz2014martingale} and Kim \cite{kim1997integrals,kim1999stochastic,jung2003set}. 
Zhang et al. \cite{zhang2009set,zhang2020submartingale} has obtained some martingale and sub-martingale properties of set-valued stochastic integral with respect to Brownian motion  as well as to the compensated Poisson measure. Specifically, the authors have showed in an M-type 2 Banach space that the set-valued stochastic integral becomes a set-valued submartingle and the Castaing representation for the set-valued stochastic integral is also obtained.
It is shown in  \cite{kisielewicz2014martingale} that for every set-valued martingale $F=(F_t)_{t\ge 0}$ with $F_0=\{0\}$, there exists a integral representation of $F_t$ in the context of the generalized set-valued stochastic integrals. 
We note, however, that for any set-valued martingale $F=(F_t)_{t\ge 0}$ with $F_0=\{0\}$ one has $E[F_t|\mathcal F_0]=F_0=\{0\}$ for all $t\ge 0$, where $E[\cdot|\cdot]$ denotes the conditional expectation of set-valued random variables (in the Aumann's sense). Therefore, one can see that  $(F_t)_{t\ge 0}$ degenerates to the singleton one (see \cite[Theorem 2.1.72]{molchanov2017theory}). This means that the representation theorem for set-valued martingales proposed in \cite{kisielewicz2014martingale} holds for single-valued martingales only. From this point of view, in some real world applications it is more reasonable to consider the condition $E[F_t|\mathcal F_0]\ni \{0\}$ for all $t\ge 0$ instead of the equal one, see \cite{tuyen2022weak}).
The authors of \cite{zhang2020remarks} then proposed a representation theorem for non-degenerated set-valued martingales that holds for a special case, $\displaystyle F_t=C+\left\{\int_0^t g_s dB_s\right\}$, where $C$ is a bounded, closed and convex subset of a Banach space $\mathfrak X$ and $(g_t)_{t\in [0,T]}$ is an $\mathfrak X$-valued stochastic process.

The aim of this work is to fill this gap by presenting a general result, showing that under some reasonable assumptions on the set-valued submartingale $\{F_t\}_{t\ge 0}$  the representation $F_t=\displaystyle \left\{\int_0^t \mathcal G_s dB_s\right\}_{t\ge 0}$ holds not only for the degenerated set-valued martingales but also for the non-degenerated one.

The paper is organized as follows. In Section \ref{preliminaries}, we briefly introduce the notations used throughout the paper and comment on some previous related results of set-valued stochastic processes. Section \ref{mainresults} devotes to the our main result, where we state and prove some general representation theorems for both non-singleton initial set-valued $\mathbb{F}$-martingales and submartingales.

\section{Notations and Preliminaries}\label{preliminaries}
Throughout of this paper, we consider a complete filtered probability space $\mathcal P_{\mathbb{F}}=(\Omega,\mathcal F,\mathbb{F},P)$ equipped with a filtration $\mathbb{F}=(\mathcal F_t)_{t\ge 0}$, where the usual conditions are satisfied. We denote by $(\mathfrak{X},\|\cdot\|)$ the separable Banach space with the Borel sigma-algebra $\mathcal B(\mathfrak{X})$, and by $K(\XX)$ the set of  all nonempty closed subsets of $\XX$. By adding the suffixes $c$, $bc$, $kc$ to $K(\XX)$, we 
denote $K_c(\XX)$, $K_{bc}(\XX)$, and $K_{kc}(\XX)$  as  the family of all nonempty closed convex, nonempty bounded closed convex, nonempty compact convex subsets of $\XX$, respectively. Denote $\|C\|=\sup\{\|x\|:x\in C\}$ for all $C\in K(\XX)$. We denote by $L^1[\Omega;\XX]$ (if $\XX=\R$ then it is written by $L^1$ for short) the set of all Bochner integrable random variables taking their values on $\XX$. For $1\le p< \infty$, denote $L^{p}[\Omega,\mathcal F,P;\XX]=L^p[\Omega;\XX]$ be the Banach space of measurable functions $f:\Omega\longrightarrow \XX$ with the norm
$$ \left\|f\right\|_p=\left\{\int_{\Omega}\left\|f(\omega)\right\|^p dP\right\}^{1/p},\, 1\le p<\infty. $$
Denote $L^p[\Omega,\mathcal F,P;\R]=L^p$ be the usual Banach space of real-valued functions with finite $p$-th moments.

Recall that a set-valued mapping $F:\Omega\to \mathbf{K}(\XX)$ is called a set-valued random variable or a random set if for each closed subset $C$ of $\XX$, $\{\omega \in\Omega: F(\omega)\cap C\ne \emptyset\}\in\mathcal{F}$. 	A function taking its values in $\XX$, $f:\Omega\to \XX$ is called a \emph{selection} of a set-valued mapping $F:\Omega\to \mathbf{K}(\XX)$ if $f(\omega)\in F(\omega)$ for all $\omega\in\Omega$. A function $f$ is called an \emph{almost everywhere selection} of $F$ if $f(\omega)\in F(\omega)$ for almost everywhere $\omega\in\Omega$. Denote by $U[\Omega,\mathcal{F},P;\mathbf{K}(\XX)]$ the family of all set-valued random variables, simply written as
 $U[\Omega;\mathbf{K}(\XX)]$.\\
For $1\le p\le \infty$, denote by
$$ S_F^p(\mathcal F)=\left\{f\in L^p[\Omega;\XX]: f(\omega)\in F(\omega)\, , a.e.\right\} $$ 
the set of $p$-order integrable selections of the set-valued random variable $F$. For simplicity, the symbol $S_F$ stands for $S^1_F$. Notice that $S_F^p$ is a closed subset of $L^p[\Omega;\XX]$.

Let $\mathcal{M}$ be a set of measurable functions $f:\Omega \to \XX$. $\mathcal{M}$ is called decomposable (with respect to $\mathcal F$) if for all $f_1, f_2\in\mathcal{M}$ and $A\in\mathcal F$ then $I_Af_1+I_{\Omega\setminus A}f_2\in\mathcal{M}$.

\begin{theorem}(\cite{li2013limit})\label{theo:closed_set_selection}
	Let $\mathcal{M}$ be a non-empty closed subset of $L^p[\Omega;\XX]$ and $1\le p <\infty$. Then there  exists an $F\in U[\Omega; \bf{K}(\XX)]$ such that $\mathcal{M}=S^p_F$, if and only if $\mathcal{M}$ is decomposable. 
\end{theorem}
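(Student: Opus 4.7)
The statement is the classical Hiai--Umegaki type correspondence between closed decomposable subsets of $L^p$ and $p$-integrable selections of a measurable random set, so my proof plan is to split the equivalence into the easy ``only if'' and the substantive ``if'' direction.

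For the ``only if'' direction I would just verify decomposability directly. Given $f_1,f_2\in S_F^p$ and $A\in\mathcal F$, the function $g=I_Af_1+I_{\Omega\setminus A}f_2$ is measurable, lies in $L^p[\Omega;\XX]$ since $\|g\|_p\le \|f_1\|_p+\|f_2\|_p$, and for a.e.\ $\omega$ its value equals either $f_1(\omega)$ or $f_2(\omega)$, both of which belong to $F(\omega)$. Hence $g\in S_F^p=\mathcal M$, which is what we need.

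For the ``if'' direction, the plan is to construct $F$ through a Castaing-type representation and then identify its $p$-integrable selections with $\mathcal M$. Since $L^p[\Omega;\XX]$ is separable, the closed set $\mathcal M$ is separable too; pick a countable dense family $\{f_n\}_{n\in\N}\subset\mathcal M$ and define
\[
F(\omega)=\cl\bigl\{f_n(\omega):n\in\N\bigr\}\qquad(\omega\in\Omega).
\]
Measurability of $F$, i.e.\ $F\in U[\Omega;\mathbf K(\XX)]$, is standard: for any open $U\subset\XX$ one has $\{\omega:F(\omega)\cap U\ne\emptyset\}=\bigcup_n\{\omega:f_n(\omega)\in U\}\in\mathcal F$, and one upgrades to the ``closed $C$'' formulation in the definition by the usual open/closed duality. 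The inclusion $\mathcal M\subset S_F^p$ is almost immediate: each $f_n\in S_F^p$ by construction, and for an arbitrary $f\in\mathcal M$ one can choose a subsequence $f_{n_k}\to f$ in $L^p$, pass to an a.e.\ convergent further subsequence, and conclude $f(\omega)\in F(\omega)$ a.e.

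The main obstacle is the reverse inclusion $S_F^p\subset\mathcal M$, which is where decomposability does all the work. Fix $g\in S_F^p$ and $\varepsilon>0$; the goal is to produce an element of $\mathcal M$ within $L^p$-distance $\varepsilon$ of $g$. For each $n$ set
\[
B_n=\bigl\{\omega:\|g(\omega)-f_n(\omega)\|<\varepsilon\bigr\},\qquad A_1=B_1,\q A_n=B_n\setminus\bigcup_{k<n}B_k,
\]
so that $\{A_n\}$ is a measurable partition of $\Omega$ (up to a null set, by density of $\{f_n(\omega)\}$ in $F(\omega)$ and $g(\omega)\in F(\omega)$ a.e.). Iterating the two-piece decomposability hypothesis, every finite sum $h_N=\sum_{n=1}^N I_{A_n}f_n+I_{\Omega\setminus\bigcup_{n\le N}A_n}f_1$ belongs to $\mathcal M$. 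As $N\to\infty$ the tail $P(\Omega\setminus\bigcup_{n\le N}A_n)\to 0$, so $h_N$ converges in $L^p$ to $h=\sum_n I_{A_n}f_n$; closedness of $\mathcal M$ yields $h\in\mathcal M$, and by construction $\|g-h\|_p\le\varepsilon\,P(\Omega)^{1/p}$. Letting $\varepsilon\to 0$ and invoking closedness once more gives $g\in\mathcal M$, completing the proof. The delicate points I would need to verify carefully are the measurability of the $B_n$'s (which needs $g$ and each $f_n$ to be measurable, hence the difference as well) and the $L^p$-domination needed to pass $h_N\to h$, both of which follow from $g,f_n\in L^p$ and a standard dominated convergence argument.
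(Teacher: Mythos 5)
The paper does not actually prove this statement: it is imported from the cited reference as a known result (the classical Hiai--Umegaki characterization of closed decomposable sets), so there is no in-paper argument to compare yours against, and I can only judge the proposal on its own merits. Most of it is the standard argument and is correct: the ``only if'' direction is routine; and in the ``if'' direction the measurability of $F(\omega)=\mathrm{cl}\{f_n(\omega)\}$, the inclusion $\mathcal M\subset S^p_F$ via an a.e.\ convergent subsequence, the partition $\{A_n\}$ built from the $\varepsilon$-neighbourhoods $B_n$, the finite decomposable approximants $h_N$, the passage to $h=\sum_n I_{A_n}f_n$ (which lies in $L^p$ because $\|h(\omega)\|\le\|g(\omega)\|+\varepsilon$ a.e.), and the two uses of closedness of $\mathcal M$ all go through.

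The one genuine gap is the very first step of the ``if'' direction: you extract a countable dense subset of $\mathcal M$ by asserting that $L^p[\Omega;\XX]$ is separable. That holds only when $\mathcal F$ is countably generated modulo null sets, which is not among the hypotheses (the statement is for an arbitrary complete probability space), and the theorem is true without it: for instance $\mathcal M=L^p[\Omega;\XX]=S^p_F$ with $F\equiv\XX$ is nonempty, closed and decomposable but non-separable when $\mathcal F$ is not countably generated, so no countable dense family exists and your construction of $F$ cannot even start. The classical proof avoids this by using only the separability of $\XX$: for a countable dense set $\{x_i\}\subset\XX$ one shows, using decomposability, that $A\mapsto\inf_{f\in\mathcal M}\int_A\bigl(\|x_i-f(\omega)\|\wedge 1\bigr)\,dP$ is a countably additive measure absolutely continuous with respect to $P$, takes its density $p_i$, and builds $F(\omega)$ from the sets $\{x_i:p_i(\omega)<1/m\}$; your approximation argument for $S^p_F\subset\mathcal M$ then runs essentially as you wrote it. In the setting this paper actually uses (the augmented natural filtration of a Brownian motion) the $\sigma$-algebra is countably generated, so your proof does cover every application of the theorem made here; but as a proof of the theorem as stated it needs either the extra separability hypothesis made explicit or that first step replaced.
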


A set-valued random variable $F$ is called integrable if $S^1_F$ is non-empty. $F$ is called $L^p$-integrably bounded (or strongly integrable) if there exists $\rho\in L^p[\Omega,\mathcal F,\R]$ such that $\|x\|\le \rho(\omega)$ for all $x\in F(\omega)$ and $\omega\in\Omega$. Let
 $L^p[\Omega,\mathcal{F},P;\mathbf{K}(\XX)]$ denotes the family of all $L^p$-integrably bounded set-valued random variables (we simply write it as $L^p[\Omega;\mathbf{K}(\XX)]$).
  If $\rho\in L^2[\Omega,\mathcal F,\R]$ then $F$ is called square integrably bound.\\
For a set-valued random variable $F\in L^p[\Omega;\mathbf{K}(\XX)]$, $F$ can be represented by its $p$-order integrable selections. It is called Castaing representation of $F$. 

\begin{theorem}(\cite{hiai1977integrals})\label{Castaing_Rep}
	For a set-valued random variable $F\in U[\Omega;\mathbf{K}(\XX)]$, there exists a sequence $\{f^i:i\in\N\}\subset S^p_F$ such that $$F(\omega)=\text{cl}\{f^i(\omega):i\in\N\}$$ for all $\omega\in\Omega$, where the closure is taken in $\XX$. \\
	Moreover, denote by $\overline{\text{dec}}$ the decomposable closure of the sequence $\{f^i:i\in\N\}$ in $L^p[\Omega;\XX]$, we have
	$$ S^p_F(\mathcal F)=\overline{\text{dec}}\{f^i:i=1,2,...\}.$$
\end{theorem}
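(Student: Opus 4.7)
The plan is to split the proof into two parts matching the two assertions. First I would construct measurable selections $\{f^i\}$ whose pointwise closures recover $F(\omega)$, which is the classical Castaing representation; then I would upgrade this to the $L^p$-level identification of $S^p_F$ with the decomposable $L^p$-closure.

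For the first part I would exploit separability of $\XX$. Fix a countable dense set $\{x_n\}_{n\ge 1}\subset\XX$ and, for each pair $(n,k)\in\N^2$, set $A_{n,k}=\{\omega:d(x_n,F(\omega))<1/k\}$. The distance map $\omega\mapsto d(x_n,F(\omega))$ is $\mathcal F$-measurable because $F$ is a random set (this is essentially the defining condition applied to the closed ball $\overline B(x_n,r)$), so $A_{n,k}\in\mathcal F$. On $A_{n,k}$ the multifunction $\omega\mapsto F(\omega)\cap\overline B(x_n,1/k)$ is nonempty, closed-valued, and measurable, so by the Kuratowski--Ryll-Nardzewski measurable selection theorem it admits a measurable selection, which I extend to all of $\Omega$ by gluing with any fixed selection $f_0\in S^p_F$ off $A_{n,k}$. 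Reindex the resulting countable family as $\{f^i\}$. For any $y\in F(\omega)$ and any $k\ge 1$, density yields $n$ with $\|x_n-y\|<1/(2k)$, whence $\omega\in A_{n,2k}$ and the associated selection satisfies $\|f^{n,2k}(\omega)-y\|<1/k$; thus $y\in\cl\{f^i(\omega):i\in\N\}$. Membership in $S^p_F$ follows from the $L^p$-integrable bound on $F$.

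For the second part, the inclusion $\overline{\text{dec}}\{f^i\}\subseteq S^p_F(\mathcal F)$ is immediate: each $f^i\in S^p_F$, the set $S^p_F$ is closed in $L^p[\Omega;\XX]$ (as noted in the text), and it is manifestly decomposable since $I_A f_1+I_{\Omega\setminus A}f_2$ is again a selection of $F$. For the reverse inclusion, fix $g\in S^p_F$ and $\varepsilon>0$. Define $B_i=\{\omega:\|g(\omega)-f^i(\omega)\|<\varepsilon\}\in\mathcal F$ and disjointify by $A_i=B_i\setminus\bigcup_{j<i}B_j$; by the density established above, $\bigcup_i A_i=\Omega$ up to a null set. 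The pointwise series $h=\sum_{i\ge 1}I_{A_i}f^i$ is then a measurable selection of $F$ satisfying $\|g-h\|_\infty\le\varepsilon$ a.e. To bring this into the decomposable hull (which consists of \emph{finite} indicator combinations of the $f^i$), I would truncate: set
$$h_N=\sum_{i=1}^{N}I_{A_i}f^i+I_{\Omega\setminus\bigcup_{i\le N}A_i}f^{1}.$$
Each $h_N$ lies in the decomposable hull of $\{f^i\}$, and $h_N\to h$ in $L^p$ by dominated convergence, using the $L^p$-integrable bound on $F$ to control $\|h_N\|$. Combining the two steps gives $\|g-h_N\|_p\le\varepsilon+o(1)$, and since $\varepsilon$ was arbitrary, $g\in\overline{\text{dec}}\{f^i\}$.

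The main technical subtlety is the truncation step in the second part: the pointwise combination $h$ a priori involves a countable partition, while the decomposable hull uses only finite partitions. The passage from $h$ to the finite $h_N$ in $L^p$ is where the integrable bound $\rho\in L^p$ on $F$ is genuinely used, via dominated convergence. The remainder of the argument is either the classical Kuratowski--Ryll-Nardzewski construction or routine verification that $S^p_F$ is closed and decomposable.
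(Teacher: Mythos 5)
The paper does not prove this statement at all --- it is quoted from Hiai and Umegaki with a citation --- so there is no in-paper argument to compare against; your proof is the standard one (Castaing's construction via Kuratowski--Ryll-Nardzewski selections on the sets $A_{n,k}$, then the disjointify-and-truncate argument for the decomposable closure) and it is correct. The only caveat is that the step placing the $f^i$ in $S^p_F$, and the dominated-convergence truncation, silently use that $F$ is $L^p$-integrably bounded; the theorem as stated assumes only $F\in U[\Omega;\mathbf{K}(\XX)]$, which does not even guarantee $S^p_F\neq\emptyset$, so strictly speaking your argument (like the result itself) needs the integrably-bounded hypothesis that the surrounding text of the paper supplies.
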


For a set-valued random variable $F$, the Aumann integral of $F$ is defined by 
$$ \int_{\Omega}F dP=\left\{\int_{\Omega}f dP:\, f\in S_F\right\}, $$
where $\displaystyle\int_{\Omega}fdP$ is usual integral Bochner in $L^1[\Omega;\XX]$. Because $\displaystyle \int_{\Omega}F dP$ is not closed in general except under some conditions such as $\XX$ has Radon Nikodym property and $F\in L^1[\Omega;K_{kc}(\XX)]$ or $\XX$ is reflexive and $F\in L^1[\Omega;K_{c}(\XX)]$ (notice that $L^p[\Omega;\XX]$ with $1<p<\infty$ and any finite dimensional spaces are reflexive), the expectation of $F$ is given by $$ E[F]=\cl\int_{\Omega}F dP $$ with the closure is taken in $\XX$.

Let $F\in U[\Omega;\mathbf{K}(\XX)]$ with $S_F\ne \emptyset$. Then there exists a unique $\A$-measurable element of $U[\Omega,\A,P;\textbf{K}(\XX)]$, denoted by $E[F|\A]$ \cite{li2013limit}, such that
\begin{equation}\label{eq:kyvongdk}
	S_{E[F|\A]}(\A) = \cl\left\{E[f|\A]:\, f\in S_F\right\},
\end{equation}
where the closure are taken in $L^1[\Omega;\XX]$. The set-valued random variable $E[F|\A]$ satisfying \eqref{eq:kyvongdk} is called the conditional expectation of $F$ with respect to $\A$. 
Clearly, if $\XX$ is reflexive and $F$ is weakly compact convex then $\left\{E[f|\A]:\, f\in S_F\right\}$ is closed in $L^1[\Omega;\XX]$, see \cite{assani1982parties}.

A set-valued random process $F=(F_t)_{t\ge 0}$ is said to be an $\mathbb{F}$-adapted convex set-valued process on $\mathcal P_{\mathbb{F}}$ if $F_t\in U[\Omega,\mathcal F_t,P;K_{c}(\XX)]$ for each $t\ge 0$. $F$ is said to be set-valued $\mathbb{F}$-martingale if $S_{F_t}({\mathcal F_t})\ne \emptyset$ and $E[F_t|\mathcal F_s]=F_s\, a.s.$ for every $0\le s\le t <\infty$. $F$ is called a set-valued $\mathbb{F}$-submartingale (resp. supermartingale) if for any $0\le s\le t$, $E[F_t|\mathcal F_s]\supset F_s$ (resp. $E[F_t|\mathcal F_s]\subset F_s$). The expectation in this definition is given as in \eqref{eq:kyvongdk}.

Given $\XX$ is a separable M-type 2 Banach space. Let $\mathcal L^2_{\mathbb F}:=L^2[\R^+\times \Omega,\beta[0,t]\times\mathcal F,P;\XX]$ be the family of all $L^2$-measurable stochastic process in $\XX$, we denote by $\mathcal P(\mathcal L^2_{\mathbb F})$ the all nonempty subsets of the space $\mathcal L^2_{\mathbb F}$. For fixed $t\ge 0$, denote by $J_B^t$ the mapping by setting $J_B^0=0$ and $\displaystyle J_B^t(\varphi)=\int_0^t\varphi_{\tau}dB_{\tau}$ for $t>0$ and every $\varphi=(\varphi_t)_{t\ge 0}\in \mathcal L^2_{\mathbb F}$ (the integral is defined in \cite{zhang2009set}). Let $\mathcal G \in \mathcal P(\mathcal L^2_{\mathbb F})$, for fixed $t >0$ there exists only one $\mathcal F_t$-measurable set-valued random variable $F_t:\Omega \to \mathbf K(\XX)$ such that $S_{F_t}(\mathcal F_t)=\overline{\text{dec}}J_B^t(\mathcal G)$ (see Theorem \ref{theo:closed_set_selection}). $F_t$, is now denoted by $\displaystyle \int_0^t\mathcal G dB_{\tau}$ and called the generalized set-valued stochastic integral of $\mathcal G$ on $[0,t]$ w.r.t the Brownian motion $B$. A set-valued martingale representation theorem has been established as follows.

\begin{theorem} (\cite{kisielewicz2014martingale})\label{theo:kisielewicz2014}
	For every set-valued $\mathbb F$-martingale $F=(F_t)_{t\ge 0}$ defined on a filtered probability space $(\Omega,\mathcal F,\mathbb F,P)$ with the completed natural filtration $\mathbb F=(\mathcal F_t)_{t\ge 0}$ of an $m$-dimensional Brownian motion $B=(B_t)_{t\ge 0}$ defined on $(\Omega,\mathcal F,P)$ and such that $F_0=\{0\}$,  there exists a set $\mathcal G\in \mathcal P(\mathcal L^2_{\mathbb F})$ such that $\displaystyle F_t=\int_0^t\mathcal GdB_{\tau}$ a.s. for every $t\ge 0$.
\end{theorem}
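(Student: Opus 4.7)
The plan is to construct $\mathcal G$ by collecting the Itô integrands of all square-integrable martingale selectors of $F$. Call an $\mathbb F$-adapted $\XX$-valued martingale $\mu=(\mu_t)_{t\ge 0}$ a \emph{martingale selector} of $F$ if $\mu_t\in F_t$ almost surely for every $t\ge 0$. The first step is to observe that, because $F$ is itself a set-valued martingale with $F_0=\{0\}$, the characterization \eqref{eq:kyvongdk} of the set-valued conditional expectation forces every $f\in S^2_{F_T}(\mathcal F_T)$ to generate a martingale selector on $[0,T]$ via $\mu_s:=E[f|\mathcal F_s]$: indeed $E[f|\mathcal F_s]\in S_{E[F_T|\mathcal F_s]}(\mathcal F_s)=S_{F_s}(\mathcal F_s)$, and $\mu_0\in F_0=\{0\}$ forces $\mu_0=0$.

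Since $\mathbb F$ is the completed natural filtration of the Brownian motion $B$, the classical Itô martingale representation theorem applies to each such square-integrable $\mu$: there exists $g_\mu\in\mathcal L^2_{\mathbb F}$ with $\mu_t=\int_0^t g_\mu\,dB_\tau$ for every $t$. I would then define
$$ \mathcal G:=\bigl\{g\in\mathcal L^2_{\mathbb F}:\,J_B^\cdot(g)\text{ is a martingale selector of }F\bigr\}. $$

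The central inclusion to verify is $S_{F_t}(\mathcal F_t)\subseteq\overline{\text{dec}}\,J_B^t(\mathcal G)$. By the Castaing representation (Theorem \ref{Castaing_Rep}) applied to $F_t$, it suffices to exhibit, for each member $f^i$ of a countable dense family of selections, an element $g^i\in\mathcal G$ with $J_B^t(g^i)=f^i$; this is supplied by the conditional-expectation construction of the previous paragraph followed by Itô representation. The reverse inclusion $\overline{\text{dec}}\,J_B^t(\mathcal G)\subseteq S_{F_t}(\mathcal F_t)$ is immediate from the definition of $\mathcal G$ together with the fact that $S_{F_t}(\mathcal F_t)$ is a closed decomposable subset of $L^1[\Omega;\XX]$ (Theorem \ref{theo:closed_set_selection}), which guarantees that forming the decomposable closure of a family of admissible selectors produces another selector family of the same set-valued random variable.

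The main obstacle I anticipate is the integrability mismatch between the $L^1$-based Aumann theory used to define $S_{F_t}$ and the $L^2$ framework demanded by the Itô representation: one must argue that $S^2_{F_t}(\mathcal F_t)$ is decomposably dense in $S_{F_t}(\mathcal F_t)$, which typically requires a square-integrable boundedness hypothesis on $F$ (natural given that the ambient space $\mathcal L^2_{\mathbb F}$ is built from $L^2$). A related subtlety is that the martingale $(E[f|\mathcal F_s])_{s\le t}$ is only defined up to the horizon $t$, so to produce a single $g\in\mathcal L^2_{\mathbb F}$ on all of $\R^+$ one must stitch together representations on intervals $[0,T_n]$ for $T_n\uparrow\infty$; this requires a measurable-selection argument exploiting convexity of $F_s$ for $s>t$ to extend each selector past its horizon, and verifying that the resulting concatenated process still lies in $\mathcal L^2_{\mathbb F}$ is where the bulk of the technical work resides.
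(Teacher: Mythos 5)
Your proposal is correct in outline and follows essentially the same route as the source: the paper itself records this theorem without proof (it is quoted from \cite{kisielewicz2014martingale}), but the scheme you describe --- collect the square-integrable martingale selectors of $F$, apply the classical It\^o representation to each, gather the integrands into $\mathcal G$, and identify $S^2_{F_t}(\mathcal F_t)$ with the decomposable closure of the $t$-sections of those selectors --- is exactly the argument underlying the cited result and the one the paper redeploys in Section 3.2 via the identity $\overline{\text{dec}}\,P_t[MS(M)]=S^2_{M_t}(\mathcal F_t)$. The technical points you flag (square-integrable boundedness so that the Castaing family can be taken in $S^2_{F_t}$, and the forward extension of $\mu_s=E[f|\mathcal F_s]$, $s\le t$, to a martingale selector on all of $[0,\infty)$, which uses reflexivity/weak compactness to make $\left\{E[h|\mathcal F_t]:h\in S_{F_s}\right\}$ closed and hence equal to $S_{F_t}(\mathcal F_t)$) are real and are precisely what the paper's Lemma \ref{lem:submartingale_Castaning} handles in the submartingale setting.
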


\begin{remark}
One can show that Theorem~\ref{theo:kisielewicz2014} holds only when $F$ is singleton. First,
we note that, for a set-valued martingale $F$, one has $E[F_t]=E[F_0]$ for all $t\ge 0$. Therefore, under the assumption  $F_0=\{0\}$ of Theorem~\ref{theo:kisielewicz2014}, $E[F_t]=\{0\}$. Now, using \cite[Theorem~2.1.72]{molchanov2017theory}, one derives that $F_t$ is degenerated to singleton. 
Conversely, for a set $\mathcal G\in \mathcal P(\mathcal L^2_{\mathbb F})$, we see that  $\displaystyle F_t=\int_0^t\mathcal GdB_{\tau}$ is not a non-degenerated set-valued $\mathbb F$-martingale but is a set-valued submartingale (see \cite{zhang2009set,zhang2020submartingale}).

\end{remark}

An extended version of Theorem~\ref{theo:kisielewicz2014}  for  the non-degenerated case  is thus proposed in \cite{zhang2020remarks}, which is recalled here as follows.

\begin{theorem}(\cite{zhang2020remarks})\label{theo:zhang2020remarks}
	Let $(\XX,\|.\|)$ be a separable reflexive M-type 2 Banach space and $\{M_t, 0\le t\le T\}$ be a $\mathbf{K}_c(\XX)$-valued $\mathbb F$-martingale. Then, the following statements are equivalent:\\
	(i) There exists a set-valued stochastic process $(G_t)_{0\le t\le T}$ such that
	$$ M_t=E(M_0)+\int_0^t G_sdB_s \, \ a.s.\, \text{for each }t.$$
	(ii) There exists an $\XX$-valued stochastic process $g=\{g_t:t\in [0,T]\}$ and a bounded, closed and convex subset $C\subset \XX$ such that
	$$M_t=C+\left\{\int_0^t g_s dB_s\right\}\, a.s.\, \text{for each }t. $$
	(iii) There exists a sequence $\{f^1,\cdots,f^n,\cdots\}$ of $\XX$-valued $\mathbb F$-martingales such that
	$$M_t=\text{cl}\{f^1,\cdots,f^n,\cdots \} \, a.s.\, \text{for each }t$$ and $f^i_t-f^j_t$ is non-random and independent of $t$ for any $i,j\ge 1$.\\
Here, the definition of M-type 2 Banach space and the set-valued stochastic integral w.r.t Brownian motion $\displaystyle I_t(G)=\int_0^tG_sdB_s$  are given in \cite{zhang2009set}.
\end{theorem}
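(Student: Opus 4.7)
The plan is to prove (iii)$\Rightarrow$(ii)$\Rightarrow$(i) as the easy structural chain and then tackle (i)$\Rightarrow$(iii), which relies on the rigidity imposed by the martingale (as opposed to submartingale) hypothesis on $M_t$.

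For (iii)$\Rightarrow$(ii), the non-randomness assumption yields $f^i_t = f^1_t + c^i$ with deterministic constants $c^i = f^i_0 - f^1_0$. The classical It\^o martingale representation on the Brownian filtration $\mathbb F$ then provides a predictable $\XX$-valued process $g$ with $f^1_t = f^1_0 + \int_0^t g_s\,dB_s$. Setting $C = f^1_0 + \cl\{c^i : i\in\N\}$ yields (ii); closedness and convexity of $C$ follow from the identity $\cl\{c^i\} = M_t(\omega) - f^1_t(\omega)$, which is deterministic because the left-hand side is, combined with $M_t$ being convex, while boundedness is inherited from the $L^2$-integrability built into the set-up. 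For (ii)$\Rightarrow$(i), evaluating at $t=0$ gives $M_0 = C$ a.s., so $E(M_0) = C$ since $C$ is deterministic, and taking the singleton-valued process $G_s = \{g_s\}$ yields (i) immediately.

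For (i)$\Rightarrow$(iii), the crucial step is to show that $t\mapsto \int_0^t G_s\,dB_s$ collapses to a singleton-valued process. Since $E(M_0)$ is deterministic, the martingale property of $M$ transfers to this set-valued integral. However, by \cite{zhang2009set,zhang2020submartingale} the set-valued It\^o integral is in general only a submartingale, with a strict set-inclusion gap that can be controlled by $L^2$-distances between pairs of selections of $G$ through the M-type 2 inequality; the martingale hypothesis forces this gap to vanish, hence any two selections produce almost surely equal It\^o integrals. Fixing one such selection $g$ and a Castaing representation $E(M_0) = \cl\{c^i : i\in\N\}$ (Theorem \ref{Castaing_Rep}), define $f^i_t = c^i + \int_0^t g_s\,dB_s$. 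Each $f^i$ is an $\XX$-valued $\mathbb F$-martingale, the decomposability argument (Theorem \ref{theo:closed_set_selection}) gives $M_t = \cl\{f^i_t : i\in\N\}$ a.s., and $f^i_t - f^j_t = c^i - c^j$ is non-random and $t$-independent, establishing (iii).

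The principal obstacle is the rigidity claim within (i)$\Rightarrow$(iii): showing that a set-valued It\^o integral which is a martingale (not merely a submartingale) must be essentially single-valued. I would attack this by expressing the submartingale defect between $E[\int_0^t G_u\,dB_u \mid \mathcal F_s]$ and $\int_0^s G_u\,dB_u$ quantitatively in terms of $\|g^i - g^j\|_{L^2([s,t]\times\Omega)}$ for arbitrary selections $g^i, g^j \in S^2(G)$ via the M-type 2 estimate, and then deducing that equality in the set inclusion for all $s<t$ forces all selections to agree almost everywhere on $\R^+\times\Omega$, collapsing the Castaing representation of $G$ to a single element.
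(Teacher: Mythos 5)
First, note that the paper does not prove this statement at all: Theorem~\ref{theo:zhang2020remarks} is imported verbatim from \cite{zhang2020remarks} and is only commented on in the subsequent remark, so there is no in-paper proof to compare against. Judged on its own terms, your chain (iii)$\Rightarrow$(ii)$\Rightarrow$(i) is sound (modulo the small observation that $f^1_0$ is a.s.\ constant because $\mathcal F_0$ is $P$-trivial for the completed Brownian filtration, which is what makes your $C$ a deterministic set), and the overall skeleton of (i)$\Rightarrow$(iii) --- reduce to showing $N_t=\int_0^t G_s\,dB_s$ is single-valued, then translate a Castaing representation of $E(M_0)$ by the resulting scalar martingale --- is the right one.

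The genuine gap is in your proposed mechanism for the rigidity step. The M-type 2 property gives only a one-sided estimate, $E\bigl\|\int_0^t\varphi_s\,dB_s\bigr\|^2\le K\,E\int_0^t\|\varphi_s\|^2\,ds$; there is no reverse inequality in a general M-type 2 Banach space (unlike the It\^o isometry in Hilbert space). Consequently, bounding the ``submartingale defect'' from above by $\|g^i-g^j\|_{L^2}$ and then letting the defect vanish tells you nothing about $\|g^i-g^j\|_{L^2}$: the inequality points the wrong way to force selections (or even their integrals) to coincide. The step can be repaired, and much more cheaply, by the argument the paper itself uses in its remark after Theorem~\ref{theo:kisielewicz2014}: once you cancel the bounded closed convex set $E(M_0)$ (R{\aa}dstr\"om cancellation, which needs boundedness --- supplied by integrable boundedness of $M_0$), the process $N_t$ is a set-valued martingale with $N_0=\{0\}$, hence $E[N_t]=E[N_0]=\{0\}$ for all $t$, and a set-valued random variable whose Aumann expectation is a singleton is a.s.\ a singleton (\cite[Theorem 2.1.72]{molchanov2017theory}; equivalently, all selections have equal integrals over every measurable set by decomposability, hence agree a.e.). This collapses $N_t$ to a classical $\XX$-valued martingale null at $0$, to which the scalar representation theorem applies, and the rest of your construction of the $f^i$ goes through unchanged.
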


\begin{remark}
It is clear that this result  still holds only in a very special case, i.e., $M_t$ equals to sum of a set $C$ and a singleton. The representation (i), $\displaystyle M_t=E(M_0)+\int_0^t G_sdB_s \, \ a.s.\, \text{for each }t$, implies that $\{M_t\}$ is a set-valued submartingale since $\displaystyle \left\{\int_0^t G_sdB_s\right\}$ is a set-valued submartingale. The statements (ii) and (iii) imply that the special case of submartingale (i.e. martingale) holds only when $\{G_t\}$ degenerates to a singleton. 
\end{remark}

\section{Main results}\label{mainresults}
In this section, we present a new representation theorem for a trivial initial set-valued $\mathbb F$-submartingale with natural filtration $\mathbb F=(\mathcal F_t)_{t\ge 0}$ of an $m$-dimensional Brownian motion $B=(B_t)_{t\ge 0}$ defined on $(\Omega,\mathcal F, P)$. Finally, we propose an extension of integral representation for non-trivial initial set-valued martingale.

\subsection{Integral representation for trivial initial set-valued submartingales}
First, we have the following lemma.
\begin{lemma}\label{lem:submartingale_Castaning}
Let $F=(F_t)_{t\ge 0}$ be a set-valued $\mathcal F$-submartingale in $L^p[\Omega,\mathcal F,\mathbb F,P;$ $ \mathbf{K}_{bc}(\XX)]$ ($p\ge 1$) with $\XX$ is reflexive. Then, there exists a system of $\XX$-valued random variables $\{f_{r,t}^k:t\ge 0,r\ge 0,k\in\N\}$ such that 
	\begin{itemize}
		\item[(i)] For each $k\in\N$, $r\ge 0$, the sequence $(f_{r,t}^k)_{t\ge 0}$ is a $\XX$- valued $\mathbb{F}$-martingale.
		\item[(ii)] For each $k\in\N$, $r,t\ge 0$ with $0\le r\le t$, $f_{r,t}^k\in S_{F_t}^p(\mathcal F_t)$.
		\item[(iii)] For each $t\ge 0$, $F_t(\omega)=cl_{\XX}\{f_{r,t}^k(\omega):0\le r\le t,k\in\N\}$ for all $\omega\in\Omega$. In addition,
		$$ S_{F_t}(\mathcal F_t)=\overline{\text{dec}}\{f_{r,t}^k(\omega):0\le r\le t,k\in\N\},$$
	\end{itemize}
	where $\text{cl}_{\XX}$ denotes the closure  taken in $\XX$.
\end{lemma}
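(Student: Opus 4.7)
The plan is to construct the family $\{f_{r,t}^k\}$ in two stages. First, for each fixed $r\ge 0$, I would invoke the Castaing representation (Theorem \ref{Castaing_Rep}) for the bounded $\mathcal F_r$-measurable random set $F_r$ to obtain a sequence $\{g_r^k\}_{k\in\N}\subset S^p_{F_r}(\mathcal F_r)$ such that $F_r(\omega)=\text{cl}_\XX\{g_r^k(\omega):k\in\N\}$ and $S^p_{F_r}(\mathcal F_r)=\overline{\text{dec}}\{g_r^k\}$. Second, for each pair $(r,k)$, I would extend $g_r^k$ to an $\mathbb F$-martingale $(f_{r,t}^k)_{t\ge 0}$ satisfying $f_{r,r}^k=g_r^k$ and $f_{r,t}^k\in S^p_{F_t}(\mathcal F_t)$ for every $t\ge r$. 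The final system of random variables would then be precisely this collection.

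For the backward direction $t\le r$, I would simply set $f_{r,t}^k:=E[g_r^k|\mathcal F_t]$, so that the tower property makes $(f_{r,t}^k)_{0\le t\le r}$ a martingale matching $g_r^k$ at $t=r$. The forward extension to $t\ge r$ is the essential ingredient and rests on the submartingale property $E[F_t|\mathcal F_s]\supset F_s$ for $s\le t$ together with reflexivity of $\XX$: since $F_t$ takes values in $\mathbf K_{bc}(\XX)$ and $\XX$ is reflexive, the discussion after \eqref{eq:kyvongdk} (together with \cite{assani1982parties}) gives that $\{E[f|\mathcal F_s]:f\in S^p_{F_t}\}$ is already closed in $L^p$. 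Thus, for any $s\le t$ and any $x\in S^p_{F_s}(\mathcal F_s)\subset S^p_{E[F_t|\mathcal F_s]}(\mathcal F_s)$, there exists $h\in S^p_{F_t}(\mathcal F_t)$ with $E[h|\mathcal F_s]=x$. Iterating this one-step selection on a countable dense grid $r=t_0<t_1<\cdots$ in $[r,\infty)$, starting from $f_{r,r}^k=g_r^k$, produces a consistent martingale along the grid, while the $L^p$-bound on $F$ combined with reflexivity provides the weak compactness needed to refine the grid and define $f_{r,t}^k$ at every $t\ge r$.

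The step I expect to be the main obstacle is exactly this interpolation between grid points: simply setting $f_{r,t}^k:=E[f_{r,t_{n+1}}^k|\mathcal F_t]$ for $t_n<t<t_{n+1}$ yields a martingale but only places $f_{r,t}^k$ in $S^p_{E[F_{t_{n+1}}|\mathcal F_t]}(\mathcal F_t)$, which strictly contains $S^p_{F_t}(\mathcal F_t)$ in general. Overcoming this will require either a careful choice of the grid-point selections (for instance, extremal preimages, or a Koml\'os-type limit along refining dyadic grids) or an appeal to the closedness and convexity of $F_t$ to ensure the limiting process is a selection of $F_t$ at every $t$. Once the family is in hand, (i) holds by construction, (ii) is ensured by the selection argument above, and (iii) is then immediate: fixing $t$, the subfamily $\{f_{t,t}^k=g_t^k:k\in\N\}$ is itself a Castaing representation of $F_t$, so $F_t(\omega)\subseteq\text{cl}_\XX\{f_{r,t}^k(\omega):0\le r\le t,k\in\N\}$ while the reverse inclusion follows from (ii); the decomposable-hull identity $S_{F_t}(\mathcal F_t)=\overline{\text{dec}}\{f_{r,t}^k\}$ then follows from the corresponding decomposable-hull statement in Theorem \ref{Castaing_Rep} applied at time $t$.
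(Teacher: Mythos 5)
Your construction follows the same route as the paper's proof: a Castaing representation of $F_r$ at each fixed $r$, backward extension by conditional expectation for $t\le r$, and forward extension for $t>r$ using reflexivity of $\XX$ (via \cite{assani1982parties}) to get closedness of $\left\{E[f|\mathcal F_r]:f\in S_{F_t}(\mathcal F_t)\right\}$, so that the submartingale inclusion $S_{F_r}(\mathcal F_r)\subset\left\{E[f|\mathcal F_r]:f\in S_{F_t}(\mathcal F_t)\right\}$ yields a preimage $f_{r,t}^k\in S_{F_t}(\mathcal F_t)$ of $f_{r,r}^k$. Part (iii) is also handled the same way: the time-$t$ Castaing family $\{f_{t,t}^k\}$ already generates $S_{F_t}$ as a decomposable hull, and (ii) gives the reverse inclusion (the paper invokes Hiai--Umegaki's approximation lemma explicitly, but the content is identical).

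The obstacle you single out --- making the forward selections consistent so that $E[f_{r,t}^k|\mathcal F_s]=f_{r,s}^k$ for \emph{all} intermediate $r\le s\le t$, not merely $E[f_{r,t}^k|\mathcal F_r]=f_{r,r}^k$ --- is a genuine one, and you should be aware that the paper's own proof does not resolve it: it performs the one-step selection independently for each pair $(r,t)$ and then simply asserts that $(f_{r,t}^k)_{t\ge 0}$ is a martingale. Two preimages $f_{r,s}^k$ and $f_{r,t}^k$ chosen separately both project to $f_{r,r}^k$ under $E[\,\cdot\,|\mathcal F_r]$, but nothing forces $E[f_{r,t}^k|\mathcal F_s]=f_{r,s}^k$. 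Your proposed repair (iterated selection along a countable grid, followed by interpolation) is the natural direction, and your diagnosis of why naive interpolation fails --- $E[f_{r,t_{n+1}}^k|\mathcal F_t]$ lands only in $S^p_{E[F_{t_{n+1}}|\mathcal F_t]}(\mathcal F_t)$, which may strictly contain $S^p_{F_t}(\mathcal F_t)$ --- is correct; completing it would require a projective-limit or martingale-selection argument that neither your sketch nor the published proof supplies. So: same approach as the paper, with the difference that you have honestly flagged the step at which the paper's argument is incomplete rather than papering over it.
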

\begin{proof}
	For fixed $r\ge 0$, since $F_{r}\in L^{p}\left[ \Omega ,\mathcal{F},\mathbb{F},P:K_{bc}\left( \mathfrak{X} \right) \right]$  and by Castaing representation (Theorem \ref{Castaing_Rep}), there is a countable family $\left\{ f_{r,r}^{k}\in {S}^p_{F_r}(\mathcal F_r),k\in \mathbb{N} \right\}$ such that $F_{r}(\omega)=\text{cl}_{\XX}\left\{ f_{r,r}^{k}(\omega ):k\in \mathbb{N} \right\}$ for all $\omega \in \Omega $. For each $k\in \mathbb{N}$ and $r\ge 0$, we define a $\mathfrak X$-valued martingale $\left(f_{r,t}^k\right)_{t\ge 0}$ as follows. For $t\le r$, define $f_{r,t}^k= E[f_{r,r}|\mathcal F_t]$. 
For $t>r$, since $\mathfrak{X}$ is reflexive and due to Eberlein-Smulyan theorem, $F_t$ is weakly compact for all $t\ge 0$ and $\omega\in\Omega$. Thus, 
	$$ S_{E[F_{t}|{\mathcal{F}}_{r}]}\left( \mathcal{F}_r \right)=\left\{ E[f|\mathcal{F}_{r}]:f\in S_{F_t}\left(\mathcal{F}_{t}\right) \right\} $$
	 is closed in $L^p[\Omega ,{\XX}]$ by \cite{assani1982parties}. 
Since $\left( F_{t} \right)_{t\ge 0}$ is a set-valued submartingale, we have 
	 $$ S_{F_{r}}\left(\mathcal{F}_{r} \right)\subset \left\{ E[f|\mathcal{F}_{r}]:f\in S_{F_t}\left(\mathcal{F}_{t}\right) \right\} $$ 
	 for every $0\le r\le t<\infty $.
	Thus, for $f_{r,r}^{k}\in S_{F_r}\left(\mathcal{F}_{r}\right)$ there exists $f_{r,t}^{k}\in S_{F_t}\left(\mathcal{F}_{t}\right)$ such that $E\left[ f_{r,t}^{k}|\mathcal{F}_{r}\right]={{f}_{r,r}}$. Hence, $f=\left( f_{r,t}^{k} \right)_{t\ge 0}$ satisfies the condition (i) and (ii). 
	
	On the other hand, for each $t\ge 0$ we have 
	$$F_{t}(\omega )=\text{cl}_{\XX}\left\{ f_{t,t}^{k}(\omega ):k\in \mathbb{N} \right\} $$ 
for all $\omega \in \Omega $. Since  $f_{r,t}^{k}\in S_{F_t}\left(\mathcal{F}_{t}\right)$ for all $0\le r\le t$ and $k\in \mathbb{N}$, we obtain $F_{t}(\omega) =\text{cl}_{\mathfrak{X}}\left\{ f_{r,t}^{k}(\omega ):0\le r\le t,k\in \mathbb{N} \right\}$ for every $\omega \in \Omega $. 
Next, it is clear that $\overline{\text{dec}}\left\{ f_{r,t}^{k}:0\le r\le t,k\in \mathbb{N} \right\}\subset S_{F_t}(\mathcal F_t)$. Conversely, for every $f\in S_{F_t}(\mathcal F_t)$ and every $\epsilon>0$, the result of F. Hiai and H. Umegaki  (\cite{hiai1977integrals}, Lemma 1.3) shows that there exists a finite measurable partition $\left\{ A_1, A_2, \cdots, A_n \right\}$ of $\Omega$ and a family $\left( f_{r,t}^{k_i} \right)_{i=1}^{n}\subset \left\{ f_{r,t}^{k}:k\ge 1 \right\}$ such that $E\left\| f-\sum\limits_{i=1}^{n}{1_{A_{i}}f_{r,t}^{k_i}} \right\|_{\mathfrak{X}}\le \epsilon $, 
which implies that $f\in \overline{\text{dec}}\left\{ f_{r,t}^{k}:0\le r\le t,k\in \mathbb{N} \right\}$.  
Thus,  $S_{F_t}\left( \mathcal{F}_{t}\right)=\overline{\text{dec}}\left\{ f_{r,t}^{k}:0\le r\le t,k\in \mathbb{N} \right\}$,
which concludes (iii).
\end{proof}

With these preparations at hand, we are now ready to present the main result of this paper.

\begin{theorem}[Set-valued submartingale representation theorem]\label{theo:submart_representation} 
	Given $\XX$ is a separable reflexive M-type 2 Banach space. Let $F=(F_t)_{t\ge 0}$ with\\ $F_t\in L^2[\Omega,\mathcal F,\mathbb F,P;\mathbf{K}_{bc}(\XX)]$ be a bounded closed convex set-valued $\mathbb F$-submartingale defined on a filtered probability space $\mathcal{P}_{\mathbb{F}}$ with completed natural filtration $\mathbb{F}=\left(\mathcal{F}_{t} \right)_{t\ge 0}$ of an $m$-dimensional  Brownian motion $B=(B_t)_{t\ge 0}$ defined on $\left( \Omega ,\mathcal{F},P \right)$.
Suppose that ${F}_{0}=\left\{ 0 \right\}$ a.s. and for each $t\ge 0$  the Castaining  representation ${F}_{t}=\text{cl}\left\{ f_{t}^{k}:k\in \mathbb{N} \right\}$ satisfies $E\left[ f_{t}^{k}|{\mathcal{F}_{s}} \right]\in S_{F_s}\left(\mathcal{F}_{s}\right)$  for all $k\in \mathbb{N}$ and $s<t$, then there exists a set $\mathcal{G}\subset \mathcal{P}\left( \mathcal{L}_{\mathbb{F}}^{2} \right)$ such that $\displaystyle F_{t}=\int_{0}^{t}\mathcal{G}d{B_{\tau }}$, where the integral is in the sense of generalized set-valued stochastic integral.
\end{theorem}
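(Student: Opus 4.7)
The plan is to reduce the set-valued representation to the classical Brownian martingale representation by running it on a suitable scalar-valued skeleton supplied by Lemma~\ref{lem:submartingale_Castaning}. Applying that lemma to $F$, and taking as the Castaing representation of each $F_r$ the sequence $\{f_r^k\}_k$ provided by the theorem's hypothesis (so $f_{r,r}^k := f_r^k$), yields a doubly indexed family $\{f_{r,t}^k:r\ge 0,\, k\in\N\}$ such that each $(f_{r,t}^k)_{t\ge 0}$ is an $\XX$-valued $\mathbb F$-martingale, each $f_{r,t}^k$ with $r\le t$ lies in $S_{F_t}^2(\mathcal F_t)$, and
\[
S_{F_t}(\mathcal F_t)=\overline{\text{dec}}\bigl\{f_{r,t}^k:0\le r\le t,\, k\in\N\bigr\}.
\]

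Next I would use the assumptions $F_0=\{0\}$ and $E[f_r^k\mid \mathcal F_s]\in S_{F_s}(\mathcal F_s)$ to anchor each martingale at the origin. Since $\mathbb F$ is the completed natural filtration of $B$, the $\sigma$-algebra $\mathcal F_0$ is trivial modulo null sets and $S_{F_0}(\mathcal F_0)=\{0\}$. Applying the selection hypothesis with $s=0$ forces $E[f_r^k\mid\mathcal F_0]=0$, and therefore $f_{r,0}^k=E[f_{r,r}^k\mid\mathcal F_0]=0$ for every $r,k$. Each process $(f_{r,t}^k)_{t\ge 0}$ is thus a square integrable $\XX$-valued $\mathbb F$-martingale starting at $0$ on the Brownian filtration; the single-valued case of Theorem~\ref{theo:kisielewicz2014} (i.e.\ the Banach-valued Brownian martingale representation theorem in an M-type 2 space) then supplies a process $g_{r,k}\in\mathcal L_{\mathbb F}^2$ with
\[
f_{r,t}^k=\int_0^t g_{r,k}(s)\,dB_s,\qquad t\ge 0.
\]

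With these integrands in hand I would set $\mathcal G:=\{g_{r,k}:r\ge 0,\, k\in\N\}\in\mathcal P(\mathcal L_{\mathbb F}^2)$ and verify $F_t=\int_0^t\mathcal G\,dB_\tau$. By construction $J_B^t(\mathcal G)=\{f_{r,t}^k:r\ge 0,\, k\in\N\}$. Every element with $r\le t$ lies in $S_{F_t}(\mathcal F_t)$ by Lemma~\ref{lem:submartingale_Castaning}(ii); for $r>t$ the standing hypothesis gives $f_{r,t}^k=E[f_r^k\mid\mathcal F_t]\in S_{F_t}(\mathcal F_t)$. Hence $J_B^t(\mathcal G)\subset S_{F_t}(\mathcal F_t)$, while part (iii) of Lemma~\ref{lem:submartingale_Castaning} gives the reverse inclusion after passing to $\overline{\text{dec}}$. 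The defining relation $S_{F_t}(\mathcal F_t)=\overline{\text{dec}}\,J_B^t(\mathcal G)$ of the generalized set-valued stochastic integral then identifies $F_t$ with $\int_0^t \mathcal G\,dB_\tau$.

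The main technical obstacle I anticipate is the invocation of the Banach-valued Brownian martingale representation theorem in step two: for real- or Hilbert-space-valued square integrable martingales this is classical, but in an arbitrary separable M-type 2 Banach space it is a more delicate fact that must be drawn from the setting of \cite{zhang2009set,kisielewicz2014martingale}. A secondary bookkeeping point is to be sure that the $L^2$-boundedness of $F$ genuinely yields integrands $g_{r,k}\in\mathcal L_{\mathbb F}^2$, so that $\mathcal G\in\mathcal P(\mathcal L_{\mathbb F}^2)$ is well defined; this follows from $F_t\in L^2[\Omega;\mathbf K_{bc}(\XX)]$ together with the isometry implicit in the M-type 2 stochastic integral.
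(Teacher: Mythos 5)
Your proposal is correct and follows essentially the same route as the paper: invoke Lemma~\ref{lem:submartingale_Castaning} to build the doubly indexed family of martingale selectors, use $F_0=\{0\}$ together with the hypothesis $E[f_t^k\mid\mathcal F_s]\in S_{F_s}(\mathcal F_s)$ to extend the family to all $r\ge 0$ with $f_{r,0}^k=0$, apply the classical martingale representation theorem to each selector, and identify $\overline{\text{dec}}\,J_B^t(\mathcal G)$ with $S_{F_t}(\mathcal F_t)$. Your write-up is in fact slightly more explicit than the paper's at the final identification step and in flagging the reliance on the Banach-valued martingale representation theorem in the M-type 2 setting, which the paper also takes for granted.
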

\begin{proof}
	By using Lemma \ref{lem:submartingale_Castaning}, we have 
	$$S_{F_t}\left( {\mathcal{F}_{t}} \right)=\overline{\text{dec}}\left\{ f_{r,t}^{k}:0\le r\le t,k\in \mathbb{N} \right\},$$  
	where $\left( f_{r,t}^{k} \right)_{t\ge 0}$ is a $\mathbb F$-martingale for each $r\ge 0$ and $k\in \mathbb{N}$. 
	
Since  $E\left[ f_{t}^{k}|\mathcal{F}_{s}\right]\in S_{F_s}\left( {\mathcal{F}_{s}} \right)$ for all $s<t$ and ${F}_{0}=\left\{ 0 \right\}$, for each $r\ge 0$ we choose $f_{r,t}^k\in S_{F_t}(\mathcal F_t)$ such that $E[f_{r,r}^k|\mathcal F_t]=f_{r,t}^k$ for all $t<r$ and $k\in \mathbb N$. Consequently, for each $t\ge 0$ we have
	\begin{equation}\label{eq:1}
		S_{F_t}\left( {\mathcal{F}_{t}} \right)=\overline{\text{dec}}\left\{ f_{r,t}^{k}:r\ge 0,k\in \mathbb{N} \right\},
	\end{equation}
	where $\left( f_{r,t}^{k} \right)_{t\ge 0}$ is a $\mathbb F$-martingale with $f_{r,0}^k=0$ for each $r\ge 0$ and $k\in \mathbb{N}$.
	
As a result of the traditional martingale representation theorem  for every martingale selector $f_{r}^{k}=\left( f_{r,t}^{k} \right)_{t\ge 0}$, there exists exactly one (with respect to the equality a.e.) $g^{f_r^k}\in \mathcal{L}_{\mathbb{F}}^{2}$ such that $\displaystyle f_{r,t}^{k}=\int _{0}^{t}g_{\tau }^{f_r^k}d{B_{\tau }}$. For $t\ge 0$, \\
	let $\mathcal{G}=\left\{ g= \left(g_{t}^{f_r^k} \right)_{t\ge 0}\in \mathcal{L}_{\mathbb{F}}^{2}:r\ge 0,k\in \mathbb{N} \right\}$. By the definition of the generalized set-valued stochastic integrals, we have
	\begin{equation}\label{eq:2}
		S_{\int_{0}^{t}{\mathcal{G}}d{B_s}}(\mathcal F_t)=\overline{\text{dec}}\left\{\int _{0}^{t}g_{s}^{f_r^k}d{B_s}:g_{s}^{f_r^k}\in \mathcal{G} \right\}.
	\end{equation}
	
	From \eqref{eq:1} and \eqref{eq:2} we have $${{F}_{t}}=\int_{0}^{t}{\mathcal{G}}d{{B}_{s}}.$$
\end{proof}
\begin{remark}
	We have the following observations
	\begin{itemize}
		\item[1.] Theorem \ref{theo:submart_representation} covers the result of Theorem \ref{theo:kisielewicz2014} because with a zero origin set-valued martingale $\{F_t\}$, it degenerates to a singleton, $F_t=\{f_t\}$. The Castaing representation of $F_t$ now becomes $F_t=cl\{f_t\}$ where all the assumptions of Theorem \ref{theo:submart_representation} are satisfied.
		\item[2.] The generalization of Theorem \ref{theo:zhang2020remarks} may be obtained for non-zero origin set-valued submartingales if the Hukuhara different (H-different) \cite{stefanini2019new}, $F_t{{\ominus }_{H}} F_0$, exists for all $t\ge 0$. Then, $F_t=F_0+(F_t\ominus_{H} F_0)$ and thus $F_t\ominus_{H} F_0$ is a zero origin set-valued submartingale. However, the existence of H-different does not always hold for every sets. Therefore, the integral representation for non-zero origin set-valued submartingales is still a open problem.
		\item[3.] The condition $E\left[ f_{t}^{k}|{\mathcal{F}}_{s} \right]\in S_{F_s}(\mathcal F_s)$ in Theorem \ref{theo:submart_representation} is not sufficient for the submartingale $F=(F_t)_{t\ge 0}$ to become a set-valued martingale. This means that the theorem is not in the generated case. To illustrate this remark, we give the following examples.
	\end{itemize}
\end{remark}
\begin{example}\label{examp:submart_repres}
	Given $u,v\in \mathcal{L}^{2}(\mathbb{R})$ are two different stochastic processes. Let $\displaystyle f_t=\int_{0}^{t}u_s d{B_s}$ and $\displaystyle g_t=\int_{0}^{t}{v_s}dB_s.$ For each $t\in [0,T]$, we define
	$$F_t=\text{cl}\left\{ \lambda {{f}_{t}}+\left( 1-\lambda  \right){{g}_{t}}:\lambda \in [0,1]\cap \mathbb{Q} \right\}.$$
	Then, $\{F_t\}, t\in [0,T]$ is a non-degenerated set-valued submartingale satisfying all conditions of Theorem \ref{theo:submart_representation} and thus its integral representation holds.
\end{example}
	
	Indeed, it is clear that $F_t=[\min\{f_t,g_t\},\max\{f_t,g_t\}]$. 
As both $\min\{f_t,g_t\}$ and $\max\{f_t,g_t\}$ are real-valued supermartingales, $F_t$ is a non-degenerate interval-valued submartingale. Since $F_0=\{0\}$ a.s., $\{F_t:t\in [0,T]\}$ is not an interval-valued martingale.
	
One can see that ${{F}_{t}}=\text{cl}\left\{ h_{t,t}^{\lambda }:\lambda \in [0,1]\cap \mathbb{Q} \right\}$ with
\[	
h_{t,t}^{\lambda }=h_{t}^{\lambda }=\lambda {{f}_{t}}+\left( 1-\lambda  \right){{g}_{t}}.
\]

Notice that
	  $\text{cl}\left\{ h_{t,t}^{\lambda }:\lambda \in [0,1]\cap \mathbb{Q} \right\}=\text{cl}\left\{ h_{r,t}^{\lambda }:\lambda \in [0,1]\cap \mathbb{Q},0\le r\le t \right\}$ since $h_{r,t}^{\lambda }\in {S}_{F_t}\left( {\mathcal{F}_{t}} \right)$ for all $0\le r\le t$, where $h_{r,t}^{\lambda}$ is designed as in Lemma \ref{lem:submartingale_Castaning}. From $h_{0,0}^{\lambda}=0$ a.s, we have $h_{r,0}^{\lambda}=0$ a.s for all $r\ge 0$.  On the other hand, for every $s<r$, 
	$E\left[ h_{r,r}^{\lambda }|{\mathcal{F}_{s}} \right]=E\left[ \lambda {{f}_{r}}+\left( 1-\lambda  \right){g_r}|{\mathcal{F}_{s}} \right]$ $=\lambda {f_s}+\left( 1-\lambda  \right){g_s}=h_{s,s}^{\lambda }$ $\in S_{F_s}\left( {\mathcal{F}_{s}} \right)$.
	Hence, $\{F_t:t\in [0,T]\}$ satisfies all the conditions of Theorem \ref{theo:submart_representation}. 
	
	Moreover, we have 
	\begin{align*}
		S_{F_t}(\mathcal F_t)&=\overline{\text{dec}}\left\{ h_{t}^{\lambda }:\lambda \in [0,1]\cap \mathbb{Q} \right\} \\ 
		& =\overline{\text{dec}}\left\{ \mathop{\int }_{0}^{t}\left( \lambda {u_s}+(1-\lambda ){v_s} \right)d{B_s}:\lambda \in [0,1]\cap \mathbb{Q} \right\} \\ 
		& =S_{\int_{0}^{t}Gd{B_s}}(\mathcal F_t), 
	\end{align*}
	where $$ G={{\left( {{G}_{t}} \right)}_{t\ge 0}}={{\left( \left\{ \lambda {{u}_{t}}+(1-\lambda ){{v}_{t}}:\lambda \in [0,1]\cap \mathbb{Q} \right\} \right)}_{t\ge 0}}.$$
	
	Thus, $\displaystyle {F}_{t}=\int_{0}^{t}Gd{B_s}$.

\begin{example}\label{examp:submart_repres_multi_demension}
	Let $\displaystyle \eta_t=\int_0^t u_s dB_s$ where $u\in \mathcal{L}^2(\mathbb R)$ is a stochastic process. We denote by $\mathbf B$ a countable set in $\XX=\mathbb R \times\mathbb R$ that $B=\{(i,j)\in\mathbb Q\times\mathbb Q:i^2+j^2\le 1\}$. Then, $\mathbf B$ is dense in $\XX$ and $cl(\mathbf B)$ is the unit ball in $\XX$. For each $t\ge 0$, let
	$$F_t=cl\{\eta_t \mathbf B\},$$
	where the closure is taken in $\mathbb R\times\mathbb R$. Then, $\{F_t,t\ge 0\}$ is also a non-degenerated set-valued submartingale satisfying all conditions of Theorem \ref{theo:submart_representation} and thus its integral representation holds.
\end{example}
	Indeed, $F_0=\{0\}$ a.s. is obviously. From $\mathbf B$ is countable, the Castaing representation of $F_t$ can be written by $F_t=cl\{f_t^k=\eta_t b_k:b_k\in B, k\in\mathbb N\}$. Hence, we have $E[f_t^k|\mathcal{F}_s]=E[\eta_t|\mathcal{F}_s]b_k =\eta_s b_k \in S_{F_s}(\mathcal{F}_s)$. Moreover, since $E[\eta_t|\mathcal{F}_s]b=E[\eta_tb|\mathcal{F}_s]$ for all $b\in \mathbf B$, we have $E[\eta_t|\mathcal{F}_s]\mathbf B\subset E[\eta_t \mathbf B|\mathcal{F}_s]$ and thus $E[F_t|\mathcal{F}_s]\supset F_s$. From this fact and the notice that $cl(\mathbf B)$ is the unit ball in $\XX$, $\{F_t,t\ge 0\}$ is a non-degenerated set-valued submartingale. Now, all the conditions of Theorem \ref{theo:submart_representation} are satisfied.
	
\subsection{Integral representation for non-trivial initial set-valued martingales}
The primary challenge in applying set-valued random variables to practical problems lies in the fact that a set-valued martingale with a trivial initial value (i.e., a singleton) will degenerate into a singleton. Therefore, our focus is exclusively on non-trivial initial set-valued martingales. 
However, the stochastic integral used in Theorems \ref{theo:kisielewicz2014},  \ref{theo:submart_representation} and \ref{theo:zhang2020remarks} unfortunately assumes zero initial value. 
Even that with certain assumptions, the set-valued martingale $(M_t)_{t\in [0,T]}$ can be represented in the form $\displaystyle M_t=E(M_0)+\int_0^t G_sdB_s$; and so  $(G_s)_{s\in [0,T]}$  will degenerate to singleton, which is still being a special case of set-valued martingales. 

To establish a stochastic integral representation for general initial set-valued martingales, it is necessary to extend the stochastic integral on the expanded space $\mathscr{L}_{\mathbb{F}}^2:=\XX\times\mathcal{L}_{\mathbb{F}}^2$. The added element $\XX$ in $\mathscr{L}_{\mathbb{F}}^2$ will store the initial values of set-valued martingales. 
In this section, we will work with $\XX=\R^d$ and so $\mathscr{L}_{\mathbb{F}}^2:=\R^d\times L^2([0,T]\times\Omega,\mathcal{F},\mathbb{F};\R^{d\times m})$.

Building on the work presented in  \cite{ararat2023set},  we now streamline the definition of the set-valued stochastic integral on $\mathscr{L}_{\mathbb F}^2$. Given an element $(x,\varphi)\in \mathscr{L}_{\mathbb{F}}^2$, for $t\in [0,T]$, define the mapping $\mathcal{J}_B^t$ by setting $\displaystyle \mathcal{J}_B^t(x,\varphi)=x+\int_0^t\varphi_\tau dB_\tau$. It is clear that $(\mathcal{J}_B^t(x,\varphi))_{t\in [0,T]}$ is a continuous $\mathbb{F}$-martingale. Let $\mathcal{G}\in \mathcal{P}(\mathscr{L}_{\mathbb{F}}^2)$ (the family of all nonempty subsets of $\mathscr{L}_{\mathbb{F}}^2$). For each $t\in [0,T]$, there exists only one $\mathcal{F}_t$-measurable set-valued random variable $F_t:\Omega \to \mathbf{K}(\R^d)$ such that $S_{F_t}(\mathcal{F}_t)=\overline{\text{dec}}\mathcal{J}_B^t(\mathcal G)$ (see Theorem \ref{theo:closed_set_selection}).

\begin{definition}
	Let $\mathcal{G}\in \mathcal{P}(\mathscr{L}_{\mathbb{F}}^2)$.  For $t\in [0,T]$, there exists only one $\mathcal{F}_t$-measurable set-valued random variable on $L^2[\Omega,\mathcal{F}_t;\mathbf{K}(\R^d)]$, denoted by $\displaystyle \int_{\Theta}^t\mathcal{G}dBs$, such that $S_{\int_{\Theta}^t\mathcal{G}dBs}(\mathcal{F}_t)=\overline{\text{dec}}\mathcal{J}_B^t(\mathcal G)$. The set-valued random variable $\displaystyle\int_{\Theta}^t\mathcal{G}dBs$ is called the generalized set-valued stochastic integral of $\mathcal{G}$ on $\mathscr{L}_{\mathbb{F}}^2$.
\end{definition}

\begin{remark}\label{re:3.2}\ 
	\begin{itemize}
		\item[1.] If $\mathcal{G}=\{x\}\times G$ for some $x\in\R^d$ and $G\subseteq\mathcal{L}_{\mathbb{F}}^2$, then $\displaystyle\int_{\Theta}^t\mathcal{G}dBs=\{x\}+\int_0^t GdB_s$. Hence, when $x=0$ we have $\displaystyle\int_{\Theta}^t\mathcal{G}dBs=\int_0^t GdB_s$.
		\item[2.] Since $\mathcal{J}_B^t(\mathcal{G})$ is a family of $\mathbb{F}$-martingale, $\displaystyle\int_{\Theta}^t\mathcal{G}dBs$ is generally a set-valued submartingale. Indeed, we have
		\begin{equation}\label{eq:3.3}
			S^2_{E[\int_{\Theta}^t\mathcal{G}dB|\mathcal{F}_s]}(\mathcal{F}_s)=\cl\left\{E[\psi|\mathcal{F}_s]:\psi\in\overline{\text{dec}}\mathcal{J}_B^t(\mathcal G)\right\}.
		\end{equation}
		We will show that $\left\{E[\psi|\mathcal{F}_s]:\psi\in\overline{\text{dec}}\mathcal{J}_B^t(\mathcal G)\right\}$ is $\mathcal{F}_s$-decomposable. Let $\xi_1,\xi_2\in \left\{E[\psi|\mathcal{F}_s]:\psi\in\overline{\text{dec}}\mathcal{J}_B^t(\mathcal G)\right\}$. For any $A\in\mathcal{F}_s$, there exists $\psi_1,\psi_2\in \overline{\text{dec}}\mathcal{J}_B^t(\mathcal G)$ such that $\xi_1=E[\psi_1|\mathcal{F}_s]$ and $\xi_2=E[\psi_2|\mathcal{F}_s]$. We have 
		\begin{align}\label{eq:3.4}
			I_A\xi_1+I_{\Omega\setminus A}\xi_2&=I_AE[\psi_1|\mathcal{F}_s]+I_{\Omega\setminus A}E[\psi_2|\mathcal{F}_s]\notag\\ 
			&=E[I_A\psi_1|\mathcal{F}_s]+E[I_{\Omega\setminus A}\psi_2|\mathcal{F}_s]\notag\\ 
			&=E[I_A\psi_1+I_{\Omega\setminus A}\psi_2|\mathcal{F}_s]\notag\\ 
			&\in \left\{E[\psi|\mathcal{F}_s]:\psi\in\overline{\text{dec}}\mathcal{J}_B^t(\mathcal G)\right\}.
		\end{align} 
	The relation ``$\in$'' in equation \eqref{eq:3.4} is due to $I_A\psi_1+I_{\Omega\setminus A}\psi_2 \in \overline{\text{dec}}\mathcal{J}_B^t(\mathcal G)$, $A\in\mathcal{F}_s\subset\mathcal{F}_t$, and the decomposability of $\overline{\text{dec}}\mathcal{J}_B^t(\mathcal G)$.\\
	 Thus, $\left\{E[\psi|\mathcal{F}_s]:\psi\in\overline{\text{dec}}\mathcal{J}_B^t(\mathcal G)\right\}$ is $\mathcal{F}_s$-decomposable. On the other hand, for all $(x,\varphi)\in \mathcal{G}$, we have $\mathcal{J}_B^s(x,\varphi)=E[\mathcal{J}_B^t(x,\varphi)|\mathcal{F}_s]$. Hence,
	 $$ \mathcal{J}_B^s(\mathcal G)\subseteq \left\{E[\psi|\mathcal{F}_s]:\psi\in\overline{\text{dec}}\mathcal{J}_B^t(\mathcal G)\right\}.$$
	 From the definition of decomposable hull of a set, we have
	 $$\text{dec}\mathcal{J}_B^s(\mathcal G)\subseteq \left\{E[\psi|\mathcal{F}_s]:\psi\in\overline{\text{dec}}\mathcal{J}_B^t(\mathcal G)\right\},$$
	 implying that
	 \begin{equation}\label{eq:3.5}
	 	\overline{\text{dec}}\mathcal{J}_B^s(\mathcal G)\subseteq \cl\left\{E[\psi|\mathcal{F}_s]:\psi\in\overline{\text{dec}}\mathcal{J}_B^t(\mathcal G)\right\}.
	 \end{equation}
	In view of \eqref{eq:3.3} and \eqref{eq:3.5}, we have
	 $$
	 	S^2_{E[\int_{\Theta}^t\mathcal{G}dB|\mathcal{F}_s]}(\mathcal{F}_s) \supseteq\overline{\text{dec}}\mathcal{J}_B^s(\mathcal G)
	 	=  S^2_{\int_{\Theta}^s\mathcal{G}dB}(\mathcal{F}_s).
	$$
 	Therefore, $\displaystyle E\left[\int_{\Theta}^t\mathcal{G}dB|\mathcal{F}_s\right]\supseteq \int_{\Theta}^s\mathcal{G}dB$. This implies that $\displaystyle\int_{\Theta}^t\mathcal{G}dBs$ is a set-valued submartingale.
	\end{itemize}
\end{remark}

Let $M=(M_t)_{t\in [0,T]}$ be a convex uniformly square integrably bounded set-valued $\mathbb{F}$-martingale. We denote by $MS(M)$ the set of all square integrably bounded $\mathbb{F}$-martingale selectors of $M$ and $P_t[MS(M)]$ the $t$th-sector of $MS(M)$. As a result of traditional martingale representation, for every martingale selector $f=(f_t)_{t\in [0,T]}\in MS(M)$, there exists only one $(x^f,\varphi^f)\in \mathscr{F}_{\mathbb F}^2$ such that $\displaystyle f_t=x^f+\int_0^t \varphi_{\tau}^fdB_{\tau}$ a.s. for every $t\in [0,T]$. Let us denote it by $\mathcal{G}^M=\left\{(x^f,\varphi^f)\in\mathscr{F}_{\mathbb F}^2: f\in MS(M)\right\}$.

\begin{theorem}
	Let $M=(M_t)_{t\in [0,T]}$ be a convex uniformly square integrably bounded set-valued $\mathbb{F}$-martingale. Then, for every $t\in [0,T]$, 
	$$ M_t=\int_{\Theta}^t \mathcal{G}^MdB_{\tau} \ \,a.s.$$
\end{theorem}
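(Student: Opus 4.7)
The plan is to mimic the strategy used for Theorem \ref{theo:submart_representation}, but now working with genuine $\mathbb F$-martingale selectors of $M$ on the whole interval $[0,T]$ and exploiting the enlarged parameter space $\mathscr L_{\mathbb F}^2=\R^d\times\mathcal L_{\mathbb F}^2$, whose $\R^d$-component records the non-trivial initial value of each selector. The end goal is to show
\[
S_{M_t}(\mathcal F_t)=\overline{\text{dec}}\,\mathcal J_B^t(\mathcal G^M)
\]
for every $t\in[0,T]$, from which the conclusion $M_t=\int_\Theta^t\mathcal G^M dB_\tau$ a.s.\ follows at once by the uniqueness part of the definition of the generalized set-valued integral on $\mathscr L_{\mathbb F}^2$, together with Theorem~\ref{theo:closed_set_selection}.

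First I would build a countable Castaing-type family of martingale selectors of $M$. Choose a countable dense set $\{r_j\}\subset[0,T]$ and, for each $j$, a Castaing representation $M_{r_j}=\text{cl}\{f^{j,k}:k\in\mathbb N\}$ with $f^{j,k}\in S_{M_{r_j}}^2(\mathcal F_{r_j})$. For each pair $(j,k)$ I need a genuine martingale $(f^{j,k}_t)_{t\in[0,T]}$ with $f^{j,k}_t\in S_{M_t}^2(\mathcal F_t)$ and $f^{j,k}_{r_j}=f^{j,k}$. Backward in time ($t\le r_j$) the natural choice $f^{j,k}_t=E[f^{j,k}\mid\mathcal F_t]$ works, because the martingale property of $M$ gives $E[f^{j,k}\mid\mathcal F_t]\in\{E[h\mid\mathcal F_t]:h\in S_{M_{r_j}}\}\subseteq S_{E[M_{r_j}\mid\mathcal F_t]}(\mathcal F_t)=S_{M_t}(\mathcal F_t)$. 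Forward in time I invoke the closedness of $\{E[h\mid\mathcal F_{r_j}]:h\in S_{M_t}\}$ in $L^1$ (reflexivity of $\R^d$ together with the uniform square-integrability of $M$ and the Assani result \cite{assani1982parties}), which combined with $E[M_t\mid\mathcal F_{r_j}]=M_{r_j}$ yields $\{E[h\mid\mathcal F_{r_j}]:h\in S_{M_t}\}=S_{M_{r_j}}(\mathcal F_{r_j})$; hence one can choose $h\in S_{M_t}$ with $E[h\mid\mathcal F_{r_j}]=f^{j,k}$ and define $f^{j,k}_t:=h$. To guarantee that the resulting family is a genuine martingale on all of $[0,T]$ I would perform this forward extension consecutively along $\{r_j\}$ and then close up by conditional expectation, verifying the tower property at each stage.

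Next I apply the classical Brownian martingale representation theorem in $\R^d$ to every martingale selector $f\in MS(M)$, obtaining the unique pair $(x^f,\varphi^f)\in\R^d\times\mathcal L_{\mathbb F}^2$ with $f_t=x^f+\int_0^t\varphi_\tau^f dB_\tau=\mathcal J_B^t(x^f,\varphi^f)$; by definition these pairs exhaust $\mathcal G^M$. The inclusion $\mathcal J_B^t(\mathcal G^M)\subseteq S_{M_t}(\mathcal F_t)$ is then tautological, so $\overline{\text{dec}}\,\mathcal J_B^t(\mathcal G^M)\subseteq S_{M_t}(\mathcal F_t)$ by closedness of $S_{M_t}^2$. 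For the reverse inclusion, the constructed family $\{f^{j,k}_t:j,k\in\mathbb N\}$ lies in $\mathcal J_B^t(\mathcal G^M)$ and, by the Castaing property plus density of $\{r_j\}$ in $[0,T]$ together with $L^2$-continuity of $M_t$ in $t$, is $\omega$-wise dense in $M_t$; Lemma~1.3 of Hiai--Umegaki then shows that any $f\in S_{M_t}(\mathcal F_t)$ is approximated in $L^1$ by finite decomposable combinations from this family, giving $S_{M_t}(\mathcal F_t)\subseteq\overline{\text{dec}}\,\mathcal J_B^t(\mathcal G^M)$.

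The main obstacle will be the forward extension step: given $f\in S_{M_r}(\mathcal F_r)$, producing $h\in S_{M_t}(\mathcal F_t)$ with $E[h\mid\mathcal F_r]=f$ for $t>r$, and then stitching these extensions together consistently over a dense countable grid of times so that the limit is actually a martingale selector of $M$. Closedness of the set of conditional expectations, which provides the existence of such $h$ at a single time, is exactly the analytic content that forces the reflexivity and uniform square-integrability hypotheses; the combinatorial step of assembling the selectors into a single martingale on $[0,T]$ is then handled by a tower-property induction along the countable grid.
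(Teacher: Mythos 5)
Your overall reduction is the same as the paper's: both arguments boil down to showing $S^2_{M_t}(\mathcal F_t)=\overline{\text{dec}}\,\mathcal J_B^t(\mathcal G^M)$, and both identify $\mathcal J_B^t(\mathcal G^M)$ with the $t$-th section $P_t[MS(M)]$ of the set of martingale selectors via the classical Brownian representation theorem applied selector by selector. The difference lies in how the identity $\overline{\text{dec}}\,P_t[MS(M)]=S^2_{M_t}(\mathcal F_t)$ is obtained: the paper simply cites Proposition~3.1 of \cite{kisielewicz2014martingale}, which asserts exactly this for convex uniformly square integrably bounded set-valued martingales, whereas you re-derive it by constructing a countable family of martingale selectors that is Castaing-dense in every $M_t$. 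Your version is more self-contained, but as written it has two weak points. First, the ``consecutive forward extension along a dense grid $\{r_j\}$'' is not well posed: a dense subset of $[0,T]$ cannot be traversed in increasing order, and the stitching you describe is unnecessary. Since the horizon is the compact interval $[0,T]$, it suffices to extend a selection $f\in S^2_{M_r}(\mathcal F_r)$ forward to the terminal time in a single step (choose $h\in S^2_{M_T}(\mathcal F_T)$ with $E[h|\mathcal F_r]=f$, using the closedness of $\left\{E[h|\mathcal F_r]:h\in S_{M_T}\right\}$ from \cite{assani1982parties} and $E[M_T|\mathcal F_r]=M_r$) and then set $f_t=E[h|\mathcal F_t]$ for all $t$; the martingale property and $f_t\in S^2_{M_t}(\mathcal F_t)$ then follow from the tower property. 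Second, your density argument at a time $t$ outside the grid appeals to ``$L^2$-continuity of $M_t$ in $t$'', which is not among the hypotheses of the theorem and would itself need proof. This gap is avoidable: the identity $\overline{\text{dec}}\,P_t[MS(M)]=S^2_{M_t}(\mathcal F_t)$ only has to be verified one $t$ at a time, so you may take a Castaing representation of $M_t$ at that very $t$, extend each of its members to a martingale selector as above, and conclude with Lemma~1.3 of \cite{hiai1977integrals}; no dense grid and no continuity in $t$ are required. With these repairs your argument is a correct, self-contained substitute for the citation the paper relies on.
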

\begin{proof} 
	For each $t\in [0,T]$, by the definition of $\int_{\Theta}^t \mathcal{G}^MdB_{\tau}$ we have $$S_{\int_{\Theta}^t\mathcal{G}^MdB}^2(\mathcal{F}_t)=\overline{\text{dec}}\mathcal{J}_B^t(\mathcal {G}^M).$$
	It is clear that $\mathcal{J}_B^t(\mathcal {G}^M)=P_t[MS(M)]$. Hence, 
	$$S_{\int_{\Theta}^t\mathcal{G}^MdB}^2(\mathcal{F}_t)=\overline{\text{dec}}P_t[MS(M)].$$
	Using the result of [Proposition 3.1,\cite{kisielewicz2014martingale}], we get $\overline{\text{dec}}P_t[MS(M)]=S_{M_t}^2(\mathcal{F}_t)$. 
	Thus,
	$$S_{\int_{\Theta}^t\mathcal{G}^MdB}^2(\mathcal{F}_t)=S_{M_t}^2(\mathcal{F}_t),$$
	showing that $\displaystyle M_t =\int_{\Theta}^t\mathcal{G}^MdB$ a.s.
\end{proof}

\begin{remark}\ 
	When $M_0=\{0\}$,  $\mathcal{G}^M=\{(0;\varphi^f):f\in MS(M)\}$. Therefore,\\ $\displaystyle \int_{\Theta}^t\mathcal{G}^MdB=\int_0^tG^MdB$, where $G^M=\left\{\varphi^f:f\in MS(M), f_t=\int_0^t\varphi^f_sdB_s, t\in [0,T]\right\}$. This results in the representation of \cite{kisielewicz2014martingale}.
\end{remark}

According to Remark \ref{re:3.2}, the set-valued stochastic integral $\displaystyle \int_{\Theta}^t \mathcal G dB_{\tau}$ is generally a set-valued submartingale. 
Inversely, it is not always true.
By using \ref{theo:submart_representation} and Lemma \ref{lem:submartingale_Castaning}, we can now give a stochastic integral representation for non-singleton initial set-valued submartingales.

\begin{corallary}
	Given $F=(F_t)_{t\ge 0}$ be a set-valued submartingale satisfying all the assumptions of Theorem \ref{theo:submart_representation} with the exception that $F_0$ is non-singleton. Then, there exists a set $\mathcal{G}\in\mathscr{F}_{\mathbb F}^2$ such that 
 \begin{equation}\label{eq:3.6}	
 F_t=\int_{\Theta}^t \mathcal G dB_{\tau}\\, \, a.s. 
\end{equation}	
\end{corallary}

\begin{proof}
	The proof can be done similarly to Theorem \ref{theo:submart_representation}. By changing $\displaystyle f_{r,t}^k=x^{f_r^k}+\int_0^t g_{\tau}^{f_r^k}dB_{\tau}$ and let $\mathcal{G}=\left\{(x^{f_r^k},g^{f_r^k}):f_r^k \text{ is introduced in Lemma \ref{lem:submartingale_Castaning}}\right\}$, one obtains the representation \eqref{eq:3.6}.
\end{proof}

\section{Conclusions}
In this work, we have shown that every set-valued martingale with zero origin degenerates to the singleton case.
 To avoid the degenerated case of set-valued martingales, we have introduced a general integral representation theorem for non-degenerated set-valued submartingales based on certain assumptions. Finally, we also present an extended stochastic integral representation for non-singleton initial set-valued martingales.
Using this work allows one to recover the previous results.

\section*{Acknowledgements}
The authors would like to thank the two anonymous referees for their valuable comments and useful suggestions that helped to improve the quality of the paper.

\section*{Funding}
The first author, L.T. Tuyen, was supported by  Institute  of  Information  Technology, 
Vietnam  Academy  of  Scinece  and  Technology, under the project no. CS.23.02.

\section*{ORCID}
Luc Tri Tuyen: 0000-0002-4822-2978

Vu Thai Luan: 0000-0003-0319-654X
\section*{Conflict of interest}
The authors declare that they have no conflict of interest.

\bibliographystyle{plain}      
\bibliography{References}   

\newcommand{\noop}[1]{}
\begin{thebibliography}{10}

\bibitem{ararat2023set}
{\c{C}}a{\u{g}}{\i}n Ararat, Jin Ma, and Wenqian Wu.
\newblock Set-valued backward stochastic differential equations.
\newblock {\em The Annals of Applied Probability}, 33(5):3418--3448, 2023.

\bibitem{arutyunov2016convex}
Aram~V Arutyunov and Valeri Obukhovskii.
\newblock {\em Convex and set-valued analysis}.
\newblock De Gruyter, 2016.

\bibitem{assani1982parties}
I~Assani and HA~Klei.
\newblock Parties d{\'e}composables compactes de l1e.
\newblock {\em CR Acad. Sci. Paris, S{\'e}rie I}, 294:533--536, 1982.

\bibitem{aubin2000introduction}
Jean-Pierre Aubin and H{\'e}l{\`e}ne Frankowska.
\newblock Introduction: Set-valued analysis in control theory.
\newblock {\em Set-Valued Analysis}, 8(1):1--9, 2000.

\bibitem{aumann1965integrals}
Robert~J Aumann.
\newblock Integrals of set-valued functions.
\newblock {\em Journal of Mathematical Analysis and Applications}, 12(1):1--12,
  1965.

\bibitem{debreu1967integration}
Gerard Debreu.
\newblock Integration of correspondences.
\newblock In {\em Proceedings of the Fifth Berkeley Symposium on Mathematical
  Statistics and Probability}, volume~2, pages 351--372. University of
  California Press Berkeley (CA), 1967.

\bibitem{hall2014martingale}
Peter Hall and Christopher~C Heyde.
\newblock {\em Martingale limit theory and its application}.
\newblock Academic press, 2014.

\bibitem{hiai1977integrals}
Fumio Hiai and Hisaharu Umegaki.
\newblock Integrals, conditional expectations, and martingales of multivalued
  functions.
\newblock {\em Journal of Multivariate Analysis}, 7(1):149--182, 1977.

\bibitem{hukuhara1967integration}
Masuo Hukuhara.
\newblock Integration des applications mesurables dont la valeur est un compact
  convexe.
\newblock {\em Funkcialaj Ekvacioj}, 10(3):205--223, 1967.

\bibitem{jung2003set}
Eun~Ju Jung and Jai~Heui Kim.
\newblock On set-valued stochastic integrals.
\newblock {\em Stochastic Analysis and Applications}, 21(2):401--418, 2003.

\bibitem{kim1999stochastic}
Byoung~Kyun Kim and Jai~Heui Kim.
\newblock Stochastic integrals of set-valued processes and fuzzy processes.
\newblock {\em Journal of Mathematical Analysis and Applications},
  236(2):480--502, 1999.

\bibitem{kim1997integrals}
Yun~Kyong Kim and Byung~Moon Ghil.
\newblock Integrals of fuzzy-number-valued functions.
\newblock {\em Fuzzy Sets and Systems}, 86(2):213--222, 1997.

\bibitem{kisielewicz1997set}
Michal Kisielewicz.
\newblock Set-valued stochastic intergrals and stochastic inclutions.
\newblock {\em Stochastic analysis and applications}, 15(5):783--800, 1997.

\bibitem{kisielewicz2012some}
Micha{\l} Kisielewicz.
\newblock Some properties of set-valued stochastic integrals.
\newblock {\em Journal of Mathematical Analysis and Applications},
  388(2):984--995, 2012.

\bibitem{kisielewicz2014martingale}
Micha{\l} Kisielewicz.
\newblock Martingale representation theorem for set-valued martingales.
\newblock {\em Journal of Mathematical Analysis and Applications},
  409(1):111--118, 2014.

\bibitem{Kisielewicz2020set}
Micha{\l} Kisielewicz et~al.
\newblock {\em Set-Valued Stochastic Integrals and Applications}.
\newblock Springer, 2020.

\bibitem{li2013limit}
Shoumei Li, Yukio Ogura, and Vladik Kreinovich.
\newblock {\em Limit theorems and applications of set-valued and fuzzy
  set-valued random variables}, volume~43 of {\em Theory and Decision Library
  B}.
\newblock Springer, Netherlands, 2002.

\bibitem{molchanov2017theory}
Ilya~S Molchanov.
\newblock {\em Theory of random sets}, volume~87 of {\em Probability Theory and
  Stochastic Modelling}.
\newblock Springer-Verlag, London, second edition, 2017.

\bibitem{peng2022painleve}
Zai-Yun Peng, Xue-Jing Chen, Yun-Bin Zhao, and Xiao-Bing Li.
\newblock Painlev{\'e}-kuratowski convergence of minimal solutions for
  set-valued optimization problems via improvement sets.
\newblock {\em Journal of Global Optimization}, pages 1--23, 2022.

\bibitem{peng2022Tech}
Zai-Yun Peng, Jing-Jing Wang, Ka~Fai~Cedric Yiu, and Yun-Bin Zhao.
\newblock Technical note on the existence of solutions for generalized
  symmetric set-valued quasi-equilibrium problems utilizing improvement set.
\newblock {\em Optimization}, 0(0):1--22, 2022.

\bibitem{stefanini2019new}
Luciano Stefanini and Barnabas Bede.
\newblock A new gh-difference for multi-dimensional convex sets and convex
  fuzzy sets.
\newblock {\em Axioms}, 8(2):48--68, 2019.

\bibitem{tuyen2022weak}
Luc~Tri Tuyen, Pham~Quoc Vuong, Vu~Xuan Quynh, and Nguyen~Gia Dang.
\newblock Weak set-valued martingale difference and its applications.
\newblock {\em International Journal of Applied Mathematics}, 35(3):397--422,
  2022.

\bibitem{wu2000henstock}
Congxin Wu and Zengtai Gong.
\newblock On henstock integrals of interval-valued functions and fuzzy-valued
  functions.
\newblock {\em Fuzzy Sets and Systems}, 115(3):377--391, 2000.

\bibitem{wu2001set}
Wei-Zhi Wu, Wen-Xiu Zhang, and Rong-Ming Wang.
\newblock Set valued bartle integrals.
\newblock {\em Journal of mathematical analysis and applications},
  255(1):1--20, 2001.

\bibitem{zhang2009set}
Jinping Zhang, Shoumei Li, Itaru Mitoma, and Yoshiaki Okazaki.
\newblock On set-valued stochastic integrals in an m-type 2 banach space.
\newblock {\em Journal of mathematical analysis and applications},
  350(1):216--233, 2009.

\bibitem{zhang2020submartingale}
Jinping Zhang, Itaru Mitoma, and Yoshiaki Okazaki.
\newblock Submartingale property of set-valued stochastic integration
  associated with poisson process and related integral equations on banach
  spaces.
\newblock {\em Journal of Nonlinear and Convex Analysis}, 22(4):775--799, 2021.

\bibitem{zhang2020remarks}
Jinping Zhang and Kouji Yano.
\newblock Remarks on martingale representation theorem for set-valued
  martingales.
\newblock In {\em International Conference on Soft Methods in Probability and
  Statistics}, pages 398--405. Springer, 2023.

\end{thebibliography}

\end{document}